\newtheorem{thm}{Theorem}[section]
\newtheorem{lem}[thm]{Lemma}
\theoremstyle{definition}
\newcommand{\scr}[1]{\mathscr #1}
\definecolor{wco}{rgb}{0.5,0.2,0.3}
\numberwithin{equation}{section} \theoremstyle{remark}
\newtheorem{rem}{Remark}[section]
\renewcommand{\bar}{\overline}
\renewcommand{\tilde}{\widetilde}
\newcommand{\ua}{\uparrow}
\title{{\bf Hypercontractivity and Its Applications for  Functional SDEs of Neutral Type \thanks{supported in part by NSFC (No.11401592).}}
}
\author{
{\bf  Jianhai Bao$^{a}$,   Chenggui Yuan$^{b}$}\\
\footnotesize{$^{a}$School of Mathematics and Statistics, Central
South
University, Changsha 410083, China}\\
 \footnotesize{$^{b}$Department of Mathematics,
Swansea University, Singleton Park, SA2 8PP, UK}\\
\footnotesize{jianhaibao13@gmail.com,   C.Yuan@swansea.ac.uk}}
\begin{document}
\def\R{\mathbb R}  \def\ff{\frac} \def\ss{\sqrt} \def\B{\mathbf
B}
\def\N{\mathbb N} \def\kk{\kappa} \def\m{{\bf m}}
\def\dd{\delta} \def\DD{\Dd} \def\vv{\varepsilon} \def\rr{\rho}
\def\<{\langle} \def\>{\rangle} \def\GG{\Gamma} \def\gg{\gamma}
  \def\nn{\nabla} \def\pp{\partial} \def\EE{\scr E}
\def\d{\text{\rm{d}}} \def\bb{\beta} \def\aa{\alpha} \def\D{\scr D}
  \def\si{\sigma} \def\ess{\text{\rm{ess}}}
\def\beg{\begin} \def\beq{\begin{equation}}  \def\F{\scr F}
\def\Ric{\text{\rm{Ric}}} \def\Hess{\text{\rm{Hess}}}
\def\e{\text{\rm{e}}} \def\ua{\underline a} \def\OO{\Omega}  \def\oo{\omega}
 \def\tt{\tilde} \def\Ric{\text{\rm{Ric}}}
\def\cut{\text{\rm{cut}}} \def\P{\mathbb P} \def\ifn{I_n(f^{\bigotimes n})}
\def\C{\scr C}      \def\aaa{\mathbf{r}}     \def\r{r}
\def\gap{\text{\rm{gap}}} \def\prr{\pi_{{\bf m},\varrho}}  \def\r{\mathbf r}
\def\Z{\mathbb Z} \def\vrr{\varrho} \def\l{\lambda}
\def\L{\scr L}\def\Tt{\tt} \def\TT{\tt}\def\II{\mathbb I}
\def\i{{\rm in}}\def\Sect{{\rm Sect}}\def\E{\mathbb E} \def\H{\mathbb H}
\def\M{\scr M}\def\Q{\mathbb Q} \def\texto{\text{o}} \def\LL{\Lambda}
\def\Rank{{\rm Rank}} \def\B{\scr B} \def\i{{\rm i}} \def\HR{\hat{\R}^d}
\def\to{\rightarrow}\def\l{\ell}\def\ll{\lambda}
\def\8{\infty}\def\ee{\epsilon}

\def\8{\infty}\def\ee{\epsilon} \def\Y{\mathbb{Y}} \def\lf{\lfloor}
\def\rf{\rfloor}\def\3{\triangle}\def\H{\mathbb{H}}\def\S{\mathbb{S}}\def\1{\lesssim}
\def\va{\varphi}

\maketitle

\begin{abstract}
In this paper, we discuss  hypercontractivity for the Markov
semigroup $P_t$ which is generated by  segment processes associated
with a range of functional SDEs of neutral type. As applications, we
also reveal that the semigroup $P_t$ converges exponentially to its
unique invariant probability measure $\mu$ in entropy, $L^2 (\mu)$
and   $\|\cdot\|_{\mbox{var}}$, respectively.
 \

\noindent
 {\bf AMS subject Classification:}\    65G17, 65G60    \\
\noindent {\bf Keywords:} Hypercontractivity, compactness,
exponential ergodicity,   functional stochastic differential
equation of neutral type, Harnack inequality
 \end{abstract}

\section{Introduction}
According to \cite{Gross} by Gross, the Markov semigroup $P_t$ is
called hypercontractivity with respect to the invariant probability
measure $\mu$ if $\|P_t\|_{2\rightarrow4}\le1$ for large $t>0,$
where $\|\cdot\|_{2\to4}$ stands for the operator norm from
$L^2(\mu)$ to $L^4(\mu)$. The hypercontractivity of Markov
semigroups has been extensively studied for various models (see,
e.g., \cite{Bakry,BWY, Davies, Gross, Wbook1, Wbook2, W13,Wang14}
and references therein). In the light of \cite{Gross}, the
log-Sobolev inequality implies the hypercontractivity. However, the
approach adopted in \cite{Gross} no longer works for functional SDEs
since the log-Sobolev inequality for the associated Dirichlet form
is invalid. In \cite{BWY}, utilizing the Harnack inequality with
power initiated in \cite{W97}, the authors investigated the
hypercontractivity and its applications for a range of
non-degenerate functional SDEs.  Wang \cite{Wang14} developed a
general framework on how to establish the hypercontractivity for
Markov semigroups (see \cite[Theorem 2.1]{Wang14}). Meanwhile, he
applied successfully the theory  to finite/infinite dimensional
stochastic Hamiltonian systems.

\smallskip
In this paper, as a continuation of our work \cite{BWY}, we are
still interested in the hypercontractivity and its applications,
however, for a kind of functional SDEs of {\it neutral type}. For a
functional SDE of neutral type, we mean an SDE  which not only
depends on the past and the present values but also involves
derivatives with delays (see, e.g., \cite[Chapter 6]{M08}). Such
equation has also been utilized to model some evolution phenomena
arising in, e.g., physics, biology and engineering. See, e.g.,
Kolmanovskii-Nosov \cite{KN} concerning the theory in
aeroelasticity,
 Mao \cite{M08} with regard to the collision problem in electrodynamics, and Slemrod
\cite{S71} for the oscillatory systems, to name a few. For large
deviation of functional SDEs of neutral type, we refer to Bao-Yuan
\cite{BY}.

\smallskip
We introduce some notation. Let $(\R^n,\<\cdot,\cdot\>,|\cdot|)$ be
an $n$-dimensional Euclidean space, and $\{W(t)\}_{t\ge0}$   an
$n$-dimensional Brownian motion defined on a complete filtered
 probability space $(\OO,\F,\{\F_t\}_{t\ge0}, \P)$.
 For  a closed interval
 $I\subset\R$,
 $C(I;\R^n)$ denotes the collection of all continuous functions
 $f:I\mapsto\R^n$. For a fixed constant $r_0>0$, let $\C:=C([-r_0,0];\R^n)$
 endowed with the uniform norm
 $\|f\|_\8:=\sup_{-r_0\le\theta\le0}|f(\theta)|$ for $f\in\C.$ For $X(\cdot)\in C([-r_0,\8);\R^n)$ and $t\ge0$,
define the segment process $X_t\in\C$ by $X_t(\theta):=X(t+\theta)$,
$\theta\in[-r_0,0]$. $\R^n\otimes\R^n$ means  the family of all
$n\times n$ matrices. $\scr P(\C)$ stands for the set of all
probability measures on $\C$, and $\|\cdot\|_{\mbox{var}}$ denotes
the total variation norm.

\smallskip

In the paper, we  focus on a   functional SDE of neutral type in the
framework
\begin{equation}\label{eq0}
\d\{X(t)+LX_t\}=\{Z(X(t))+b(X_t)\}\d t+\si\d
W(t),~~~t>0,~~X_0=\xi\in\C,
\end{equation}
where, for any $\phi\in\C$,
\begin{equation}\label{r1}
L\phi:=\kk\int_{-r_0}^0\phi(\theta)\d\theta, \mbox{  with } \kk\in(0,1),
\end{equation}
$Z:\R^n\mapsto\R^n$ and $b:\C\mapsto\mathbb{R}^n$ are progressively
measurable, and $\si\in\R^n\otimes\R^n$ is an invertible matrix.

\smallskip
Throughout the paper, for any $x,y\in\R^n$ and $ \xi,\eta\in\C$, we
assume that
\begin{enumerate}
\item[(H1)] $Z$ and $b$ are Lipschitzian with Lipschitz
constants $L_1>0$ and $L_2>0$, respectively, i.e., $|Z(x)-Z(y)|\le
L_1|x-y|$ and $ |b(\xi)-b(\eta)|\le L_2\|\xi-\eta\|_\8;$

\item[(H2)] There exist constants $\ll_1>\ll_2>0$ such that
\begin{equation*}
2\<Z(\xi(0))-Z(\eta(0))+b(\xi)-b(\eta),
\xi(0)-\eta(0)+L(\xi-\eta)\>\le \ll_2
\|\xi-\eta\|_\infty^2-\ll_1|\xi(0)-\eta(0)|^2.
 \end{equation*}
\end{enumerate}

On the basis of  (H1) and $\kk\in(0,1),$ \eqref{eq0} admits a unique
strong solution $\{X(t)\}_{t\ge-r_0}$ (see, e.g., \cite[Theorem 2.2,
p204]{M08}). The dissipative-type condition (H2) is imposed to
reveal the long-time behavior of segment process $\{X_t\}_{t\ge0}$.
For more details, please refer to Lemmas
\ref{exponential}-\ref{inva} below. Moreover, for illustrative
examples such that (H2) holds, we would like to refer to, e.g.,
\cite[Example 5.2]{MSY}. On occasions, to emphasize the initial
datum $X_0=\xi\in\C$, we write $\{X(t;\xi)\}_{t\ge-r_0}$ instead of
$\{X(t)\}_{t\ge-r_0}$, and $\{X_t(\xi)\}_{t\ge0}$ in lieu of
$\{X_t\}_{t\ge0}$, respectively.

\smallskip
 As we present in the beginning of the
second paragraph, in this paper, we intend to investigate the
hypercontractivity and its applications for the Markov semigroup
\begin{equation}\label{eq5}
P_tf(\xi):=\E f(X_t(\xi)),~f\in\B_b(\C),~\xi\in\C.
\end{equation}

\smallskip

Our main result in this paper is stated as below.

\begin{thm}\label{T1.1}
{\rm  Let (H1)-(H2) hold and assume  $\kk\in(0,1)$, and suppose
further that
\begin{equation}\label{07}
1-\kk r_0^2 \e^{\rr r_0}>0~~\mbox{ and
}~~\ll:=\rr-\ff{(\kk r_0^2\ll_1+\ll_2)\e^{\rr r_0}}{(1-\kk)(1-\kk r_0^2\e^{\rr
r_0})}>0.
\end{equation}
 with $\rr=\ll_1/(1+\kk)$. Then, the following assertions hold.
\beg{enumerate}
\item[$(1)$] $P_t$ has a unique invariant
probability measure $\mu$.
\item[$(2)$] $P_t$ is hypercontractive.
\item[$(3)$] $P_t$ is compact on $L^2(\mu)$ for large enough $t>0$,
and there exist $c,\aa>0$ such that
\begin{equation*}
\mu((P_tf)\log P_t f)\le c\e^{-\aa t}\mu(f\log f)
\end{equation*}

\item[$(4)$] There exists a constant $C>0$ such that
$$\|P_t-\mu\|_2^2:= \sup_{\mu(f^2)\le 1}\mu\big((P_t f-\mu(f))^2\big)\le C\e^{-\ll t},\ \ t\ge 0.$$
\item[$(5)$] There exist two constants $t_0, C>0$ such that
$$\|P_t^\xi-P_t^\eta\|_{\mbox{var}}^2\le  C\|\xi-\eta\|_\infty^2 \e^{-\ll t},\ \ t\ge t_0,$$ where   $P_t^\xi$ stands for  the law of $X_t^\xi$ for   $(t,\xi)\in
 [0,\infty)\times\C$.
\end{enumerate}
}
\end{thm}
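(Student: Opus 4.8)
The plan is to establish the five assertions in the natural order, bootstrapping from a dissipativity/contractivity estimate for the segment process. First I would prove a \emph{quasi-contractivity} estimate: for two solutions $X(t;\xi)$, $X(t;\eta)$ driven by the same Brownian motion, set $\Gamma(t)=X(t;\xi)-X(t;\eta)$ and consider the neutral functional $U(t)=\Gamma(t)+L\Gamma_t$. Applying It\^o's formula to $\e^{\rr t}|U(t)|^2$ and using (H2) to control the drift, one gets, after handling the $|L\Gamma_t|^2$ cross terms via $|L\phi|^2\le\kk r_0^2\cdot\|\phi\|_\infty^2$ and a Gronwall-type argument over the delay window, an exponential decay bound of the form $\E\|X_t(\xi)-X_t(\eta)\|_\infty^2\le C\e^{-\ll t}\|\xi-\eta\|_\infty^2$; the conditions in \eqref{07} are exactly what makes the resulting constant $\ll$ positive. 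This is the content of the lemmas advertised as Lemmas~\ref{exponential}--\ref{inva}, so I would simply invoke them. From this contractivity, existence and uniqueness of the invariant measure $\mu$ (assertion~(1)) follows by the standard Banach-type argument: $(P_n^\xi)$ is Cauchy in the Wasserstein-type sense uniformly in $\xi$, hence converges to a limit independent of $\xi$, which must be the unique invariant measure.

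For assertions~(2)--(4) I would follow the route of \cite{BWY}, \cite{Wang14}: the key input is a dimension-free Harnack inequality with power for $P_t$, which holds because $\si$ is invertible (non-degenerate noise) — one couples $X(t;\xi)$ and $X(t;\eta)$ by a drift transformation and uses the Girsanov theorem, exactly as in \cite{W97,Wbook2}, the neutral term $LX_t$ being absorbed into the drift since $\kk\in(0,1)$. Combining the Harnack inequality with the exponential decay of $\|P_t^\xi-P_t^\eta\|_{\mathrm{var}}$ and the boundedness of $P_t$ into $L^\infty$, one verifies the hypotheses of \cite[Theorem~2.1]{Wang14}, which yields hypercontractivity (2), compactness of $P_t$ on $L^2(\mu)$ for large $t$ together with the exponential entropy decay (3), and the $L^2(\mu)$ exponential convergence (4) with the same rate $\ll$. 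Assertion~(4) can alternatively be derived directly: hypercontractivity plus the spectral gap it forces (compactness gives a discrete spectrum, and the decay estimate pins the gap to at least $\ll$) gives $\|P_t-\mu\|_2^2\le C\e^{-\ll t}$.

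Assertion~(5) is where the neutral structure requires the most care, and I expect it to be the main obstacle. The strategy is: for $t\ge t_0$ write $P_t^\xi=P_{t_0}^{\xi}$ evolved by $P_{t-t_0}$, and estimate
\[
\|P_t^\xi-P_t^\eta\|_{\mathrm{var}}
\le \E\,\|P_{t-t_0}^{X_{t_0}(\xi)}-P_{t-t_0}^{X_{t_0}(\eta)}\|_{\mathrm{var}},
\]
reducing to two ingredients: (i) a \emph{short-time} total-variation bound $\|P_{t_0}^{\zeta}-P_{t_0}^{\zeta'}\|_{\mathrm{var}}\le C\|\zeta-\zeta'\|_\infty$, which comes from the Harnack inequality (a log-Harnack inequality gives $\|P_{t_0}^{\zeta}-P_{t_0}^{\zeta'}\|_{\mathrm{var}}^2\le C\,\|\zeta-\zeta'\|_\infty^2$ once $t_0>r_0$, so that the coupling has enough time past the memory length to act), and (ii) the exponential contractivity $\E\|X_{t-t_0}(\zeta)-X_{t-t_0}(\zeta')\|_\infty^2\le C\e^{-\ll(t-t_0)}\|\zeta-\zeta'\|_\infty^2$ from step one. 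Splitting via the semigroup and the triangle inequality for $\|\cdot\|_{\mathrm{var}}$, pushing the contractivity estimate inside, and then applying (i), yields $\|P_t^\xi-P_t^\eta\|_{\mathrm{var}}^2\le C\e^{-\ll(t-t_0)}\|\xi-\eta\|_\infty^2$, which is the claim after absorbing $\e^{\ll t_0}$ into $C$. The delicate point is ensuring the Harnack/coupling step is valid for the \emph{neutral} equation: the coupling drift must be chosen to steer $X(t;\zeta)-X(t;\zeta')$ rather than the bare difference, and one must check that the extra $L$-term does not blow up the exponential moment in the Girsanov density — this is controlled precisely because $\kk\in(0,1)$ and $1-\kk r_0^2\e^{\rr r_0}>0$.
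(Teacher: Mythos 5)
Your overall architecture coincides with the paper's: a pathwise contraction estimate (Lemma~\ref{long}) and a Gaussian-type exponential moment bound (Lemma~\ref{exponential}) feed a Wasserstein--Cauchy argument for the invariant measure (Lemma~\ref{inva}); a dimension-free Harnack inequality with power (Lemma~\ref{Harnack}) then verifies conditions (i)--(iii) of \cite[Theorem 2.1]{Wang14} (Lemma~\ref{general}) to give (1)--(3); (4) comes from \cite[Proposition 2.3]{Wang14}; and (5) follows the \cite{BWY} pattern of combining the short-time Harnack/total-variation bound for $P_{t_0}$ with the exponential contraction over $[0,t-t_0]$. Your ordering of the semigroup splitting in (5) is written backwards (the displayed inequality evolves by $P_{t-t_0}$ \emph{after} $P_{t_0}$, while the two ingredients you list require $P_tf=P_{t-t_0}(P_{t_0}f)$), but both ingredients are correct and the standard combination gives the claim.

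The one place where your sketch is materially thinner than the paper is the Harnack inequality, which is precisely the paper's new technical content. The step does \emph{not} go ``exactly as in \cite{W97,Wbook2}'': in Lemma~\ref{Harnack} the auxiliary process $Y$ is defined by \eqref{*} with the neutral term $LX_s$ of the \emph{original} solution rather than $LY_s$, so that the bare difference $X(s)-Y(s)$ obeys a memory-free equation and can be steered to zero by time $t$; one must then show the coupling persists on $[\tau,t+r_0]$ so that the full segments coincide, and --- this is the key point --- the Girsanov drift $h$ must contain the extra compensating pieces $h_1 1_{[0,r_0]}+h_2 1_{(r_0,t+r_0]}$ reproducing $\d L(Y_s-X_s)$ (identities \eqref{eq2}--\eqref{eq4}) so that under $\Q$ the process $Y$ solves the genuine neutral equation with its own term $LY_s$ and initial datum $\eta$; finiteness of the exponential moment of the density then comes from the explicit deterministic bound \eqref{*8}, not from the condition $1-\kk r_0^2\e^{\rr r_0}>0$ (that condition governs the contraction rate and Lemma~\ref{exponential}, not Novikov). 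Your alternative of ``absorbing $LX_t$ into the drift'' can in fact be made rigorous for the specific linear kernel \eqref{r1}, since $\tfrac{\d}{\d t}LX_t=\kk(X(t)-X(t-r_0))$ turns \eqref{eq0} into an ordinary functional SDE with a Lipschitz drift to which the \cite{BWY} Harnack machinery applies; but then you would need to rework the dissipativity input, since (H2) is formulated for the neutral functional $\xi(0)-\eta(0)+L(\xi-\eta)$ and does not transfer verbatim to the bare difference. As written, the proposal asserts the conclusion of Lemma~\ref{Harnack} without supplying either construction, and that is the missing idea.
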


Compared with \cite{BWY,WY,Wang14}, this paper contains the
following new points: (1) Theorem \ref{T1.1} works for functional
SDEs of {\it neutral type} and covers \cite[Theorem  1.1]{BWY}
whenever $\kk=0$; (2) The argument of Lemma \ref{Harnack} gives some
new ideas on how to establish the dimensional-free Harnack
inequality for functional SDEs of neutral type.  The remainder of
this paper is organized as follows. In Section \ref{sec2},  we
investigate Gauss-type concentration property (see Lemma
\ref{exponential}), existence and uniqueness of invariant measure
(see Lemma \ref{inva}), and the Harnack inequality (see Lemma
\ref{Harnack}), and  devote to completing the proof of Theorem
\ref{T1.1}.

\section{Proof of Theorem \ref{T1.1}}\label{sec2}

\cite[Theorem 2.1]{Wang14}, which is stated as Lemma \ref{general}
below for presentation convenience, establishes a general result on
the hypercontractivity of Markov semigroups. For the present
situation, the key point in the proof of Theorem \ref{T1.1} is to
realize (i)-(iii) in \cite[Theorem 2.1]{Wang14}, one by one.
\smallskip

 Let $(E,\B,\mu)$ be a probability space, and
$P_t$ a Markov semigroup on $\B_b(E)$ such that $\mu$ is
$P_t$-invariant. Recall that a process $(X_t,Y_t)$ on $E\times E$ is
called a coupling associated with the semigroup $P_t$ if
\begin{equation*}
P_tf(\xi)=\E(f(X_t)|\xi),~~~P_tf(\eta)=\E(f(Y_t)|\eta),~~f\in\B_b(E),~t\ge0.
\end{equation*}

\begin{lem}\label{general}
{\rm Assume that the following three conditions hold for some
measurable functions $\rr:E\times E\mapsto(0,\8)$ and
$\phi:[0,\8)\mapsto(0,\8)$ with $\lim_{t\to\8}\phi(t)=0:$
\begin{enumerate}
\item[(i)] There exist constants $t_0,c_0>0$ such that
\begin{equation*}
(P_{t_0}f(\xi))^2\le
(P_{t_0}f^2(\eta))\e^{c_0\rr(\xi,\eta)^2},~~~f\in\B_b(E),~\xi,\eta\in
E;
\end{equation*}

\item[(ii)] For any $\xi,\eta\in E\times E$, there exists a coupling
$X_t,Y_t$ associated with $P_t$ such that
\begin{equation*}
\rr(X_t,Y_t)\le\phi(t)\rr(\xi,\eta),~~~t\ge0;
\end{equation*}

\item[(iii)] There exists $\vv>0$ such that
$(\mu\times\mu)(\e^{\vv\rr^2})<\8$.
\end{enumerate}
Then $\mu$ is the unique invariant probability measure of $P_t$,
$P_t$ is hypercontractive and compact in $L^2(\mu)$ for large $t>0.$

}
\end{lem}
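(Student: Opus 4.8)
The plan is to extract from the pointwise Harnack inequality (i) and the contractive coupling (ii) a \emph{time-dependent} Harnack inequality whose power constant decays to zero, and then to feed this into the integrability assumption (iii). First I would propagate (i) along the coupling: for $T\ge t_0$, take the coupling $(X_s,Y_s)$ from (ii) started at $(\xi,\eta)$, apply (i) at the random pair $(X_{T-t_0},Y_{T-t_0})$, insert the contraction bound $\rr(X_{T-t_0},Y_{T-t_0})\le\phi(T-t_0)\rr(\xi,\eta)$, take expectations, and collapse both sides using the Markov property (on the $Y$-marginal) together with Jensen's inequality (on the $X$-marginal). This should produce
$$(P_Tf(\xi))^2\le (P_Tf^2(\eta))\,\e^{c(T)\rr(\xi,\eta)^2},\qquad T\ge t_0,\quad c(T):=c_0\phi(T-t_0)^2\to0,$$
i.e. the same shape as (i) but with an arbitrarily small constant for large $T$.

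Compactness I expect to follow most directly. Integrating the time-dependent form over $\eta$ gives $|P_Tf(\xi)|\le\|f\|_{L^2(\mu)}\,h_T(\xi)^{-1/2}$ with $h_T(\xi)=\mu(\e^{-c(T)\rr(\xi,\cdot)^2})$, so $f\mapsto P_Tf(\xi)$ is an $L^2(\mu)$-bounded functional and hence $P_T(\xi,\cdot)\ll\mu$ with density $k^T_\xi\in L^2(\mu)$. Once $c(T)<\vv$, assumption (iii) (via Jensen applied to $h_T$) yields $\mu(h_T^{-1})<\8$. Then for any weakly null sequence $f_n$ with $\|f_n\|_{L^2(\mu)}\le1$ one has $P_Tf_n(\xi)=\mu(f_nk^T_\xi)\to0$ pointwise, dominated by the $L^2(\mu)$ function $h_T^{-1/2}$, so dominated convergence gives $\|P_Tf_n\|_{L^2(\mu)}\to0$; thus $P_T$ is compact on $L^2(\mu)$ for large $T$.

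For uniqueness and $L^2$-ergodicity I would use (ii) directly: for $\rr$-Lipschitz $f$, $|P_tf(\xi)-P_tf(\eta)|\le\phi(t)\,\mathrm{Lip}(f)\,\rr(\xi,\eta)\to0$, which forces any invariant measure to assign the same value to $f$, giving uniqueness of $\mu$; the same bound gives $\mathrm{Var}_\mu(P_tf)\le\tfrac12\phi(t)^2\mathrm{Lip}(f)^2(\mu\times\mu)(\rr^2)\to0$. Since $P_t$ is an $L^2(\mu)$-contraction and Lipschitz functions are dense, this extends to $\|P_tf-\mu(f)\|_{L^2(\mu)}\to0$ for every $f\in L^2(\mu)$. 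Feeding this strong convergence into the spectral theory of the compact operator $P_T$ restricted to $L^2_0(\mu)$ (eigenvalues accumulate only at $0$, and $P_T^ng=\lambda^ng\to0$ forces $|\lambda|<1$) shows the spectral radius of $P_T|_{L^2_0(\mu)}$ is strictly less than $1$, hence by Gelfand's formula $\|P_t\|_{L^2_0\to L^2_0}=:\delta(t)\to0$.

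The remaining and, I expect, hardest task is upgrading hyperboundedness to the \emph{sharp} bound $\|P_T\|_{2\to4}\le1$, the subtlety being that $\|P_T\|_{2\to4}\ge1$ always (test on constants), so the three hypotheses must cooperate exactly. Integrating the squared time-dependent Harnack inequality over $\eta$ and using (iii) for $c(T)$ small first yields $\|P_T\|_{2\to4}<\8$ with a bound tending to $1$. To reach $\le1$ I would write $T=r_0+s$ with $\|P_{r_0}\|_{2\to4}=:M<\8$, decompose $f=\mu(f)+f_0$, and set $g:=P_{r_0}P_sf_0$, so that $\mu(g)=0$ and $\|g\|_{L^4(\mu)}\le M\delta(s)\|f_0\|_{L^2(\mu)}$. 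Expanding $\|P_Tf\|_{L^4(\mu)}^4=\mu((\mu(f)+g)^4)$, the linear term vanishes by $\mu(g)=0$, and bounding the quadratic, cubic and quartic terms through $\|g\|_{L^4(\mu)}$ together with an elementary AM--GM inequality gives $\|P_Tf\|_{L^4(\mu)}^4\le\|f\|_{L^2(\mu)}^4$ as soon as $\delta(s)$ is small relative to $M$. Taking $T$ large then delivers hypercontractivity, completing the three assertions.
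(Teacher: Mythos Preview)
The paper does not prove this lemma: it is quoted from \cite[Theorem~2.1]{Wang14} ``for presentation convenience'' and then simply invoked, so there is no in-paper argument to compare against. Your proposal is therefore a reconstruction of Wang's proof, and the skeleton you give is the right one. Propagating (i) through the coupling (ii) via the Markov property and Jensen to obtain
\[
(P_Tf(\xi))^2\le (P_Tf^2(\eta))\,\e^{c(T)\rr(\xi,\eta)^2},\qquad c(T)=c_0\phi(T-t_0)^2\to0,
\]
then integrating in $\eta$ against $\mu$ and invoking (iii), is exactly the mechanism that produces absolute continuity $P_T(\xi,\cdot)\ll\mu$ with $L^2$ density, the Hilbert--Schmidt bound, compactness, and the estimate $\|P_T\|_{2\to4}^4\le\mu(h_T^{-2})\to1$. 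Your last step---decomposing $f=\mu(f)+f_0$, using hyperboundedness at a fixed time $r_0$ together with an $L^2_0$-contraction rate $\dd(s)\to0$, and expanding $\mu((\mu(f)+g)^4)$---does close the gap between $\|P_T\|_{2\to4}\to1$ and $\|P_T\|_{2\to4}\le1$; a discriminant check on the resulting quadratic in $|\mu(f)|$ confirms the inequality once $M\dd(s)$ is small enough.

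The one genuine soft spot is how you reach $\dd(s)\to0$. You argue strong $L^2$-ergodicity by first treating $\rr$-Lipschitz $f$ via (ii) and then extending by density of such $f$ in $L^2(\mu)$. In the lemma \emph{as stated here} $\rr$ is only a measurable map $E\times E\to(0,\infty)$, not a metric (it does not even vanish on the diagonal), so ``$\rr$-Lipschitz'' carries no structure and density is unavailable; your uniqueness argument has the same issue. In the paper's application $\rr(\xi,\eta)=\|\xi-\eta\|_\infty$ on $\C$ and everything you wrote is fine. If you want the argument to match the abstract hypotheses literally, replace this step: uniqueness follows directly from (i), since $(P_{t_0}1_A(\xi))^2\le P_{t_0}1_A(\eta)\,\e^{c_0\rr(\xi,\eta)^2}$ forces all $P_{t_0}(\xi,\cdot)$ to be mutually equivalent, hence any invariant probability is equivalent to $\mu$; and the spectral gap can then be extracted from compactness of $P_T|_{L^2_0(\mu)}$ together with the equality case of $|P_Tf|\le P_T|f|$ (which, using the equivalence of all transition kernels, rules out peripheral eigenvalues). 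With that replacement your plan is complete.
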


\smallskip

Hereinafter, we first investigate the following exponential-type
estimate, which plays a crucial role in discussing the
hypercontractivity.

\begin{lem}\label{exponential}
{\rm Under the assumptions of Theorem \ref{T1.1}, there exist
$\vv,c>0$ such that
\begin{equation}\label{09}
\E\, \e^{\vv \|X_t^\xi\|_\infty^2} \le \e^{c(1+\|\xi\|_\infty^2)},\
\ t\ge 0,~ \xi\in \C.\end{equation}}
\end{lem}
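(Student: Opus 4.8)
The plan is to derive a differential inequality for an exponential moment of a suitably weighted functional of the solution and then apply Gr\"onwall's inequality. First I would set $\rr = \ll_1/(1+\kk)$ as in \eqref{07} and study the process $Y(t) := X(t) + LX_t$, which by \eqref{eq0} satisfies $\d Y(t) = \{Z(X(t)) + b(X_t)\}\d t + \si\,\d W(t)$. The point of passing to $Y$ is that (H2), applied with $\eta \equiv 0$ (absorbing the constants $Z(0), b(0)$ via (H1) and Young's inequality), controls $2\<Z(X(t)) + b(X_t), Y(t)\>$ from above by something of the form $\ll_2\|X_t\|_\infty^2 - \ll_1|X(t)|^2 + C$. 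Using $|Y(t)| \le |X(t)| + \kk r_0 \|X_t\|_\infty$ together with $|X(t)| \ge |Y(t)| - \kk r_0\|X_t\|_\infty$ and $\kk \in (0,1)$, one rewrites this as a genuine dissipativity estimate; the quantity $1 - \kk r_0^2 \e^{\rr r_0} > 0$ is exactly what is needed to beat the $\|X_t\|_\infty^2$ term after the delay is integrated out.

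The key step is an It\^o-formula computation for $\e^{\vv \e^{\rr t}|Y(t)|^2}$ (or a closely related functional). Applying It\^o's formula, the drift contribution produces a term $\vv \e^{\rr t}\big(\rr |Y(t)|^2 + 2\<Z(X(t)) + b(X_t), Y(t)\> + \|\si\|_{HS}^2 + 2\vv \e^{\rr t}|\si^\top Y(t)|^2\big)\e^{\vv\e^{\rr t}|Y(t)|^2}$. Choosing $\vv$ small enough (depending on $\|\si\|$ and $\rr$) makes the $2\vv\e^{\rr t}|\si^\top Y(t)|^2$ term absorbable, and then the dissipativity estimate from the previous paragraph shows the $|Y(t)|^2$ coefficient is negative up to a term $\lesssim \ll_2 \e^{\rr t}\|X_t\|_\infty^2$. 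Taking expectations kills the martingale part, and one is left with
\begin{equation*}
\frac{\d}{\d t}\E\,\e^{\vv\e^{\rr t}|Y(t)|^2} \le C_1 \vv \e^{\rr t}\,\E\Big[\big(1 + \|X_t\|_\infty^2\big)\e^{\vv\e^{\rr t}|Y(t)|^2}\Big].
\end{equation*}
The delay term $\|X_t\|_\infty^2$ is handled by bounding $\sup_{-r_0 \le \theta \le 0}|Y(t+\theta)|^2$ in terms of $\sup_{s \le t}|Y(s)|^2$ and feeding back the running-supremum bound; this is where the smallness quantified by $1 - \kk r_0^2\e^{\rr r_0}$ and the definition of $\ll$ in \eqref{07} enter to close the loop and guarantee the coefficient of the self-referential exponential stays subcritical.

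Finally, I would convert the bound on $\E\,\e^{\vv\e^{\rr t}|Y(t)|^2}$ back to one on $\E\,\e^{\vv'\|X_t^\xi\|_\infty^2}$. Since $\|X_t\|_\infty \le \|Y_t\|_\infty + \kk r_0\|X_t\|_\infty$ with $\kk r_0 < 1$ possible to arrange (or, more robustly, using the linear relation to solve for $X$ in terms of $Y$ over a window of length $r_0$), one gets $\|X_t\|_\infty^2 \le C_2\big(\sup_{t - r_0 \le s \le t}|Y(s)|^2 + \|\xi\|_\infty^2\big)$; combining with a running-maximum version of the estimate above and Jensen's inequality (to pull the supremum inside the exponential at the cost of adjusting $\vv$) yields \eqref{09} with the constant $c$ absorbing $\|\xi\|_\infty^2$ linearly because the initial value enters only through $\e^{\vv\e^{\rr\cdot 0}|Y(0)|^2} = \e^{\vv|\xi(0) + L\xi|^2} \le \e^{C_3\|\xi\|_\infty^2}$. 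The main obstacle I anticipate is the neutral term: the functional $|Y(t)|^2$ is the natural Lyapunov quantity but the dissipativity in (H2) is stated in terms of $\xi(0) + L(\xi-\eta)$ while the noise and It\^o correction act on $Y$, so bookkeeping the conversions $X \leftrightarrow Y$ while keeping every constant compatible with the precise threshold $1 - \kk r_0^2\e^{\rr r_0} > 0$ — rather than a cruder sufficient condition — is the delicate part.
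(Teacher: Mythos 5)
Your setup is the same as the paper's: pass to $\GG(t):=X(t)+LX_t$, apply (H2) with $\eta\equiv 0$ to get $2\<Z(X(t))+b(X_t),\GG(t)\>\le c_0-\ll_1'|X(t)|^2+\ll_2'\|X_t\|_\8^2$, convert $-|X(t)|^2\le -\ff{1}{1+\kk}|\GG(t)|^2+\kk r_0^2\|X_t\|_\8^2$, and use $1-\kk r_0^2\e^{\rr r_0}>0$ to absorb the delayed supremum. The divergence, and the gap, is in the stochastic step. You apply It\^o to $\e^{\vv\e^{\rr t}|\GG(t)|^2}$ and take expectations, arriving at
\begin{equation*}
\ff{\d}{\d t}\E\,\e^{\vv\e^{\rr t}|\GG(t)|^2}\le C_1\vv\e^{\rr t}\,\E\Big[(1+\|X_t\|_\8^2)\,\e^{\vv\e^{\rr t}|\GG(t)|^2}\Big].
\end{equation*}
This inequality does not close. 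First, the functional itself is ill-chosen: in the long run $|\GG(t)|$ stays of order one (the noise is nondegenerate), so $\E\,\e^{\vv\e^{\rr t}|\GG(t)|^2}$ blows up as $t\to\8$; at best you could aim for a bound of size $\e^{C\e^{\rr t}}$ and then undo the weight by Jensen, but even for that the right-hand side must be of the form $(\text{coefficient})\times\E\,\e^{\vv\e^{\rr t}|\GG(t)|^2}$. Second, and more fundamentally, the cross term $\E[\|X_t\|_\8^2\,\e^{\vv\e^{\rr t}|\GG(t)|^2}]$ cannot be fed back into a Gronwall loop for the exponential moments: $\|X_t\|_\8$ and $|\GG(t)|$ are correlated, and any bound of a polynomial times an exponential by a pure exponential (e.g.\ $u\e^{v}\le C_\dd\e^{\dd u+v}$ followed by H\"older) strictly increases the constant in the exponent, so the iteration degrades $\vv$ at every step and never closes. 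Your ``feeding back the running-supremum bound'' does not address this, and the appeal to Jensen at the end goes in the wrong direction: Jensen gives $\e^{\E[\cdot]}\le\E\e^{[\cdot]}$, whereas what is needed is an exponential moment of a running supremum.

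The paper avoids this entirely by staying pathwise for as long as possible: It\^o is applied to $\e^{\rr' t}|\GG(t)|^2$ (not to its exponential), the martingale part is dominated by its running maximum $N(t)=\sup_{0\le s\le t}M(s)$, and a \emph{pathwise} Gronwall argument on $\e^{\rr's}\|X_s\|_\8^2$ yields $\|X_t\|_\8^2\le c_3(1+\|\xi\|_\8^2)+c_3\e^{-\rr' t}N(t)+c_3\int_0^t\e^{-\rr's-\ll'(t-s)}N(s)\,\d s$. Only then does one exponentiate, split by Cauchy--Schwarz, and control $\E\exp(cN(t))$ via exponential-martingale (Doob-type) estimates --- this is exactly the content of the step quoted from \cite[(2.4)--(2.5)]{BWY}. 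If you want to salvage your route you must replace your key step by this pathwise-then-exponentiate mechanism, or by some other device (e.g.\ the exponential martingale identity) that controls the exponential moment of the stochastic integral without ever forming the product $\E[\|X_t\|_\8^2\,\e^{\vv(\cdots)}]$.
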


\begin{proof}
For $\eta(\theta)\equiv0,\theta\in[-r_0,0]$, by (H2) and \eqref{07},
we deduce that
\begin{equation}\label{06}
\begin{split}
&2\<\xi(0)+L\xi, Z(\xi(0))+b(\xi)\>\\
&=2\<\xi(0)-\eta(0)+L(\xi-\eta),
Z(\xi(0))-Z(\eta(0))+b(\xi)-b(\eta)+Z(\eta(0))+b(\eta)\>\\
&\le c_0-\ll_1^\prime|\xi(0)|^2+\ll_2^\prime\|\xi\|_\8^2
\end{split}
\end{equation}
for some constants $c_0>0$ and $\ll_1^\prime,\ll_2^\prime>0$ such
that
\begin{equation}\label{08}
1-\kk r_0^2\e^{\rr^\prime
r_0}>0,~~~~\ll^\prime:=\rr^\prime-\ff{(\kk r_0^2\ll_1^\prime+\ll_2^\prime)\e^{\rr^\prime
r_0}}{(1-\kk)(1-\kk r_0^2\e^{\rr^\prime r_0})}>0,
\end{equation}
where $\rr^\prime=\ll_1^\prime/(1+\kk)$. For simplicity, let $
\GG(t):=X(t)+LX_t. $ By It\^o's formula,  it follows from \eqref{06}
that
\begin{equation}\label{eq6}
\d\{\e^{\rr^\prime t}|\GG(t)|^2\} \le\e^{\rr^\prime
t}\{\rr^\prime|\GG(t)|^2+c_0-\ll_1^\prime|X(t)|^2+\ll_2^\prime\|X_t\|^2_\8\}\d
t+\d M(t),
\end{equation}
where $\d M(t):=2\e^{\rr^\prime t}\<\GG(t),\si\d W(t)\>$. Recall the
following elementary inequality:
\begin{equation}\label{q2}
(a+b)^2\le (1+\dd)a^2+(1+\dd^{-1})b^2,~a,b\in\R,~\dd>0.
\end{equation}
So one has
\begin{equation*}
|\GG(t)|^2\le(1+\kk)|X(t)|^2+(1+\kk^{-1})|LX_t|^2,
\end{equation*}
which, together with $|L\phi|\le \kk\|\phi\|_\8 $ for $\phi\in\C$
and $\kk\in(0,1),$ leads to
\begin{equation*}
-|X(t)|^2\le -\ff{1}{1+\kk}|\GG(t)|^2+\kk r_0^2\|X_t\|^2_\8.
\end{equation*}
Substituting this into \eqref{eq6} gives that
\begin{equation}\label{04}
\e^{\rr^\prime
t}|\GG(t)|^2\le|\GG(0)|^2+\ff{c_0}{\rr^\prime}\e^{\rr^\prime
t}+N(t)+(\kk r_0^2\ll_1^\prime+\ll_2^\prime)\int_0^t\e^{\rr^\prime
s}\|X_s\|^2_\8\d s,
\end{equation}
where $N(t):=\sup_{0\le s\le t}M(s)$.  Choosing
$\dd=\ff{\kk}{1-\kk}$ in \eqref{q2} and noting $|L\phi|\le
\kk r_0\|\phi\|_\8$ for  $\phi\in\C$ and $\kk\in(0,1)$ into
consideration, we derive that
\begin{equation*}
\begin{split}
\e^{\rr^\prime s}|X(s)|^2 &\le\ff{1}{1-\kk}\e^{\rr^\prime
s}|\GG(s)|^2+\kk r_0^2\e^{\rr^\prime
r_0}\sup_{-r_0\le\theta\le0}(\e^{\rr^\prime(s+\theta)}|X(s+\theta)|^2)\\
&\le\ff{1}{1-\kk}\e^{\rr^\prime s}|\GG(s)|^2+\kk r_0^2\e^{\rr^\prime
r_0}\sup_{s-r_0\le u\le s}(\e^{\rr^\prime u}|X(u)|^2).
\end{split}
\end{equation*}
We then have
\begin{equation*}
\sup_{0\le s\le t}(\e^{\rr^\prime s}|X(s)|^2)
 \le\kk r_0^2\e^{\rr^\prime r_0}\|\xi\|^2_\8+\ff{1}{1-\kk}\sup_{0\le
s\le t}(\e^{\rr^\prime s}|\GG(s)|^2)+\kk r_0^2 \e^{\rr^\prime
r_0}\sup_{0\le s\le t}(\e^{\rr^\prime s}|X(s)|^2)
\end{equation*}
so  that, due to \eqref{08},
\begin{equation}\label{01}
\sup_{0\le s\le t}(\e^{\rr^\prime s}|X(s)|^2)
\le\ff{\kk r_0^2\e^{\rr^\prime r_0}}{1-\kk r_0^2\e^{\rr^\prime
r_0}}\|\xi\|^2_\8+\ll^{\prime\prime}\sup_{0\le s\le
t}(\e^{\rr^\prime s}|\GG(s)|^2),
\end{equation}
in which $\ll^{\prime\prime}:=(1-\kk)^{-1}(1-\kk r_0^2\e^{\rr^\prime
r_0})^{-1}$. Moreover, it is easy to see that
\begin{equation}\label{02}
\e^{\rr^\prime t}\|X_t\|^2_\8\le\e^{\rr^\prime r_0}\sup_{t-r_0\le
s\le t}(\e^{\rr^\prime s}|X(s)|^2).
\end{equation}
Thus, from \eqref{01} and \eqref{02}, we infer that
\begin{equation}\label{03}
\e^{\rr^\prime t}\|X_t\|^2_\8 \le
c_1\|\xi\|^2_\8+\ll^{\prime\prime}\e^{\rr^\prime r_0}\sup_{0\le s\le
t}(\e^{\rr^\prime s}|\GG(s)|^2)
\end{equation}
for some $c_1>0$. Taking \eqref{04} and \eqref{03} into account
yields that
\begin{equation}\label{q3}
\e^{\rr^\prime t}\|X_t\|^2_\8 \le
c_1\|\xi\|^2_\8+c_2(N(t)+\e^{\rr^\prime
t})+\gg^\prime\int_0^t\e^{\rr^\prime s}\|X_s\|^2_\8\d s
\end{equation}
for some $c_2>0$, where $\gamma^\prime:=\ll^{\prime\prime}\e^{\rr'
r_0}(\kk\ll_1^\prime+\ll_2^\prime)$. By Gronwall's inequality and
\eqref{08}, one has
\begin{equation*}
\|X_t\|^2_\8 \le c_3(1+\|\xi\|^2_\8)+c_3\e^{-\rr^\prime
t}N(t)+c_3\int_0^t \e^{-\rr' s-\ll^\prime(t -s)}N(s)\d s
\end{equation*}
for some constant $c_3>0,$  where $\ll^\prime>0$ is defined in
\eqref{08}. Hence, for any $\vv>0$, we have
\begin{equation*}
\begin{split}
\E\e^{\vv\|X_t\|^2_\8}\le \e^{c_3(1+\|\xi\|^2_\8)}\ss{I_1\times
I_2},
\end{split}
\end{equation*}
in which  \beg{equation*}\beg{split} &I_1:= \E \exp\bigg[ 2 c_3\vv
\int_0^{t } \e^{-\rr' s-\ll^\prime(t -s)}N(s)\d s\bigg] ~~~\mbox{
and }~~~ I_2:= \E \exp\big[ 2 c_3\vv\e^{-\rr't}N(t
)\big].\end{split}\end{equation*} Next, on following arguments of
\cite[(2.4) and (2.5)]{BWY}, there exists $\vv>0$ and $c_4>0$ such
that
$$\e^{\ll^\prime t} \E \e^{\vv\|X_t \|_\infty^2}\le \e^{c_4(1+\|\xi\|_\infty^2)+\ll^\prime t}+ \ff {\ll^\prime} 2 \int_{-r_0}^t
 \e^{\ll^\prime s}\E \e^{\vv
\|X_{s }\|_\infty^2}\d s.$$ Consequently, an application of the
Gronwall inequality gives that
\begin{equation*}
\begin{split}
 \E \e^{\vv\|X_t \|_\infty^2}\le
\e^{c_4(1+\|\xi\|_\infty^2)}+\ff {\ll^\prime}
2\int_{-r_0}^t\e^{c_4(1+\|\xi\|_\infty^2)}\e^{-\ff {\ll^\prime
(t-s)} 2}\d s,
\end{split}
\end{equation*}
and the desired assertion \eqref{09} follows immediately due to
$\ll^\prime>0.$

\end{proof}

\begin{rem}
{\rm In Lemma \ref{exponential}, if $\kk=0$, then the first
condition in \eqref{07} is definitely true, and the second one
reduces to $\ll_1>\ll_2\e^{\ll_2r_0}$, which is imposed in
\cite[Lemma 2.1]{BWY} to show the Gauss-type concentration property
of the unique invariant probability measure for a range of
non-degenerate functional SDEs. }
\end{rem}

\begin{rem}
{\rm By an  inspection of argument of Lemma \ref{exponential},
we observe that Lemma \ref{exponential} still holds for a general
neutral term $G:\C\mapsto\R^n$ provided that
$|G(\xi)|\le\kk\|\xi\|_\8$ for   $\xi\in\C$ and $\kk\in(0,1)$.
However, in Lemma \ref{exponential},  we only consider the linear
case $G(\xi)=L\xi,\xi\in\C,$  this is for the consistency with the
Harnack inequality to be established in Lemma \ref{Harnack} below. }
\end{rem}

\smallskip
The lemma below states that segment processes with different initial
datum close to each other when the time tends to infinity.

\begin{lem}\label{long}
{\rm Under the assumptions of Theorem \ref{T1.1}, there exists a constant $c>0$
such that
\begin{equation}\label{w1}
\|X_t(\xi)-X_t(\eta)\|_\8^2\le c\e^{-\ll t}\| \xi-\eta\|^2_\8,
\end{equation}
where $\ll>0$ is given in  \eqref{07}.

}
\end{lem}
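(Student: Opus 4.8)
The plan is to mimic the a priori estimate carried out in the proof of Lemma \ref{exponential}, but now applied to the \emph{difference} of two solutions rather than to a single solution, replacing the dissipativity bound \eqref{06} with the genuine two-point condition (H2). Write $X(t)=X(t;\xi)$, $Y(t)=X(t;\eta)$, $U(t):=X(t)-Y(t)$, and $\GG(t):=U(t)+L(X_t-Y_t)=U(t)+LU_t$. Since the noise $\si\,\d W(t)$ is additive, it cancels in the equation for $U$, so
\[
\d\GG(t)=\big\{Z(X(t))-Z(Y(t))+b(X_t)-b(Y_t)\big\}\d t,
\]
a pathwise ODE with no martingale part. By It\^o's formula and (H2) applied with $\xi\leadsto X_t$, $\eta\leadsto Y_t$,
\[
\d\{\e^{\rr t}|\GG(t)|^2\}\le \e^{\rr t}\big\{\rr|\GG(t)|^2+\ll_2\|U_t\|_\8^2-\ll_1|U(t)|^2\big\}\d t,
\]
with $\rr=\ll_1/(1+\kk)$ as in \eqref{07}. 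This is the exact analogue of \eqref{eq6} with $c_0=0$ and $N(t)\equiv0$.

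From here I would repeat verbatim the chain of inequalities \eqref{q2}--\eqref{q3}: use $(a+b)^2\le(1+\dd)a^2+(1+\dd^{-1})b^2$ twice (once with $\dd=\kk$ to bound $-|U(t)|^2$ by $-\frac1{1+\kk}|\GG(t)|^2+\kk r_0^2\|U_t\|_\8^2$, once with $\dd=\kk/(1-\kk)$ to control $\sup_{0\le s\le t}\e^{\rr s}|U(s)|^2$ in terms of $\sup_{0\le s\le t}\e^{\rr s}|\GG(s)|^2$ and $\|\xi-\eta\|_\8^2$), together with $|L\phi|\le\kk r_0\|\phi\|_\8$ and the first condition $1-\kk r_0^2\e^{\rr r_0}>0$ in \eqref{07}. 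Integrating the displayed differential inequality gives
\[
\e^{\rr t}\|U_t\|_\8^2\le c_1\|\xi-\eta\|_\8^2+(\kk r_0^2\ll_1+\ll_2)\,\ll''\e^{\rr r_0}\int_0^t\e^{\rr s}\|U_s\|_\8^2\,\d s,
\]
with $\ll''=(1-\kk)^{-1}(1-\kk r_0^2\e^{\rr r_0})^{-1}$, exactly as in \eqref{q3} but without the $N(t)+\e^{\rr t}$ term. Gronwall's inequality then yields $\|U_t\|_\8^2\le c\,\e^{-(\rr-\gg)t}\|\xi-\eta\|_\8^2$ where $\gg=\ll''\e^{\rr r_0}(\kk r_0^2\ll_1+\ll_2)$, and $\rr-\gg$ is precisely the constant $\ll$ defined in \eqref{07}, which is positive by hypothesis.

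I do not expect a genuine obstacle here: the neutral structure is handled exactly as in Lemma \ref{exponential}, and the only real difference is bookkeeping — there is no stochastic integral, so the delicate exponential-moment estimates for $N(t)$ (the $I_1,I_2$ step) are simply absent, and one gets an $L^2$ contraction directly. The one point deserving a line of care is checking that the constant emerging from Gronwall is literally $\ll$ from \eqref{07} and not merely comparable to it; this is just tracking that $c_0$ and the drift-at-$\eta$ terms that forced the passage from $(\rr',\ll_1',\ll_2')$ to $(\rr,\ll_1,\ll_2)$ in Lemma \ref{exponential} do not appear now, so one may work with the original constants throughout.
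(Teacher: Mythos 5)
Your proposal is correct and follows essentially the same route as the paper: both apply (H2) to $\Phi(t)=X(t;\xi)-X(t;\eta)+L(X_t(\xi)-X_t(\eta))$, note that the additive noise cancels so the analogue of \eqref{eq6} holds with no martingale term, rerun the chain of estimates leading to \eqref{q3} with the unprimed constants, and conclude by Gronwall with $\rr-\gg=\ll$. Your remark that the coefficient emerging from Gronwall is literally the $\ll$ of \eqref{07} is exactly the bookkeeping the paper performs (its coefficient $\rr-\ll$ equals your $\gg$).
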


\begin{proof}
For simplicity, set
\begin{equation*}
\Phi(t):=X(t;\xi)- X(t;\eta)+L(X_t(\xi)-  X_t(\eta)).
\end{equation*}
 By the chain rule, for $\rr=\ll_1/(1+\kk)$, it follows that
\begin{equation*}
\d(\e^{\rr t}|\Phi(t)|^2) \le\e^{\rr
t}\{\rr|\Phi(t)|^2-\ll_1|X(t;\xi)- X(t;\eta)|^2+\ll_2\|X_t(\xi)-
X_t(\eta)\|_\8^2\}\d t.
\end{equation*}
Thus, carrying out a similar   argument to derive \eqref{q3}, we can
deduce that
\begin{equation*}
\e^{\rr t}\|X_t(\xi)- X_t(\eta)\|^2_\8 \le
C_1\|\xi-\eta\|^2_\8+(\rr-\ll)\int_0^t\e^{\rr s}\|X_s(\xi)-
X_s(\eta)\|^2_\8\d s
\end{equation*}
for some constant $C_1>0$, where $\ll>0$ is defined as in
\eqref{07}. Next, by virtue of   Gronwall's inequality, one finds
\begin{equation*}
\|X_t(\xi)-X_t(\eta)\|^2_\8 \le C_1\e^{-\ll t}\| \xi-\eta\|^2_\8.
\end{equation*}
The proof is, therefore, complete.
\end{proof}

\begin{lem}\label{inva}
{\rm Let the conditions of Theorem \ref{T1.1} hold. Then, the Markov
semigroup $P_t$, defined in \eqref{eq5},   admits a unique invariant
probability measure $\mu\in \scr P(\C)$.

}
\end{lem}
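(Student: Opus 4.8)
The plan is to establish existence and uniqueness of the invariant probability measure $\mu$ by the standard two-step scheme: first produce an invariant measure as a limit point of the time-averaged laws of $\{X_t(\xi)\}$, using tightness on the path space $\C$; second, deduce uniqueness from the contraction estimate of Lemma \ref{long}. For existence, I would fix $\xi\in\C$ (say $\xi\equiv 0$) and define the Krylov--Bogoliubov averages $\mu_T(\cdot):=\frac1T\int_0^T P_s^*\dd_\xi(\cdot)\,\d s$, i.e.\ $\mu_T(A)=\frac1T\int_0^T\P(X_s(\xi)\in A)\,\d s$ for $A\in\B(\C)$. The key input is tightness of $\{\mu_T\}_{T\ge 1}$ in $\scr P(\C)$.

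For tightness one needs two things: a uniform bound on $\E\|X_s(\xi)\|_\8^2$, and a uniform modulus-of-continuity control on the segment processes. The first follows at once from Lemma \ref{exponential}, since $\E\|X_s(\xi)\|_\8^2\le \vv^{-1}\E\,\e^{\vv\|X_s^\xi\|_\8^2}\le \vv^{-1}\e^{c(1+\|\xi\|_\8^2)}=:K_\xi$ uniformly in $s\ge 0$, which by Chebyshev gives a uniformly small-measure bound on $\{\|f\|_\8>R\}$. For the equicontinuity, I would use equation \eqref{eq0}: writing $\GG(t)=X(t)+LX_t$, the process $\GG$ is a semimartingale with drift $Z(X(t))+b(X_t)$ and diffusion $\si\,\d W(t)$; the drift is controlled in $L^2$ by (H1) together with the moment bound $K_\xi$ (Lipschitz implies linear growth), so a Kolmogorov-type estimate gives $\E|\GG(t)-\GG(s)|^4\le C|t-s|^2$ uniformly, hence Hölder continuity of $\GG$ with a uniform modulus in probability. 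Since $LX_t$ is itself Lipschitz-in-$t$ with constant controlled by $\kk r_0$ and $\sup|X|$ (from $L\phi=\kk\int_{-r_0}^0\phi$, the map $t\mapsto LX_t$ has derivative $\kk(X(t)-X(t-r_0))$), we recover a uniform modulus of continuity for $X$ itself, and therefore for the segment $X_t\in\C$. Feeding these two facts into the Arzelà--Ascoli characterization of compact sets in $\C$ yields tightness of $\{\mu_T\}$; any weak limit point $\mu=\lim_k\mu_{T_k}$ is $P_t$-invariant by the usual argument (Feller property of $P_t$, which holds by continuous dependence on initial data, e.g.\ via Lemma \ref{long}, plus letting $T_k\to\8$ in $\mu_{T_k}(P_t f)-\mu_{T_k}(f)=\frac1{T_k}\int_0^{T_k}(P_{s+t}f(\xi)-P_sf(\xi))\,\d s\to 0$).

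For uniqueness, suppose $\mu$ and $\nu$ are both invariant. For bounded Lipschitz $f$ on $\C$ and any $t\ge 0$, $\mu(f)-\nu(f)=\int\int\big(P_tf(\xi)-P_tf(\eta)\big)\,\mu(\d\xi)\nu(\d\eta)=\int\int \E\big(f(X_t(\xi))-f(X_t(\eta))\big)\,\mu(\d\xi)\nu(\d\eta)$, and by Lipschitz continuity of $f$ together with Lemma \ref{long}, $|P_tf(\xi)-P_tf(\eta)|\le \|f\|_{\mathrm{Lip}}\,\E\|X_t(\xi)-X_t(\eta)\|_\8\le \|f\|_{\mathrm{Lip}}\,\ss{c}\,\e^{-\ll t/2}\|\xi-\eta\|_\8$. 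Integrating against $\mu\times\nu$ (both have finite second moment by the Gauss-type bound applied to the invariant measure, cf.\ Lemma \ref{exponential} / condition (iii) of Lemma \ref{general}) and sending $t\to\8$ forces $\mu(f)=\nu(f)$ for all bounded Lipschitz $f$, hence $\mu=\nu$.

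The main obstacle, and the place where I would spend the most care, is the equicontinuity estimate for tightness: the neutral term $LX_t$ couples the "current" increment of $X$ to its delayed values, so one cannot directly treat $X$ as a semimartingale. The clean way around this is precisely to work with $\GG(t)=X(t)+LX_t$, prove the Kolmogorov moment bound for $\GG$, and then \emph{invert} the relation $X(t)=\GG(t)-\kk\int_{-r_0}^0 X(t+\theta)\,\d\theta$ to transfer regularity back to $X$; because $\kk\in(0,1)$ this inversion is a contraction and the transfer is quantitative. Alternatively, since Lemma \ref{general} already delivers uniqueness of the invariant measure as part of its conclusion once (i)--(iii) are verified (which is done elsewhere in the proof of Theorem \ref{T1.1}), one may simply invoke that; but giving the self-contained Krylov--Bogoliubov argument above makes Lemma \ref{inva} independent and is the route I would present.
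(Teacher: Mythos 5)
Your proposal is essentially correct, but it takes a genuinely different route from the paper for the existence part. The paper avoids tightness and Krylov--Bogoliubov altogether: it equips $\scr P(\C)$ with the $L^1$-Wasserstein distance $W$ induced by the \emph{bounded} metric $\rr(\xi,\eta)=1\land\|\xi-\eta\|_\infty$, and shows directly that $t\mapsto\P_t^\xi$ is a $W$-Cauchy family by coupling the solution started at time $0$ with a copy started at time $t_2-t_1$ from the same initial datum (driven by the same Brownian motion), then applying the contraction estimate of type \eqref{q3} together with the moment bound \eqref{09}; completeness of $(\scr P(\C),W)$ then yields the limit $\mu^\xi$, and Lemma \ref{long} shows it is independent of $\xi$. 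This buys a significant simplification: since the difference of two solutions driven by the same noise satisfies a pathwise (noise-free) inequality, no Kolmogorov continuity estimate, no Arzel\`a--Ascoli compactness criterion in $\C$, and no transfer of regularity from $\GG(t)=X(t)+LX_t$ back to $X$ are needed. Your approach, by contrast, front-loads exactly that work into the equicontinuity step -- which you correctly identify as the delicate point, and which is doable but laborious for the neutral equation (and note that for $s<r_0$ the segment $X_s$ still contains a piece of the merely continuous initial datum $\xi$, so the averaging should start from $s=r_0$ or that boundary effect should be discarded as asymptotically negligible). Two further small points. First, in your uniqueness step the bound $|\mu(f)-\nu(f)|\le\|f\|_{\mathrm{Lip}}\ss c\,\e^{-\ll t/2}\int\!\!\int\|\xi-\eta\|_\infty\,\mu(\d\xi)\nu(\d\eta)$ requires finite first moments of an \emph{arbitrary} invariant measure $\nu$, and Lemma \ref{exponential} does not immediately deliver this (integrating $\E\,\e^{\vv\|X_t^\xi\|_\infty^2}\le\e^{c(1+\|\xi\|_\infty^2)}$ against $\nu$ is circular). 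This is harmless: for bounded Lipschitz $f$ one has $|f(x)-f(y)|\le 2\|f\|_\infty\land\big(\|f\|_{\mathrm{Lip}}\|x-y\|_\infty\big)$, and dominated convergence using the pathwise bound of Lemma \ref{long} gives $\mu(f)=\nu(f)$ with no moment hypothesis -- which is precisely what the paper's choice of the bounded metric $\rr$ accomplishes. Second, the ``contraction/inversion'' language for recovering the modulus of continuity of $X$ from that of $\GG$ is unnecessary: $t\mapsto LX_t=\kk\int_{t-r_0}^tX(u)\,\d u$ is $C^1$ with derivative $\kk(X(t)-X(t-r_0))$, so $X(t)=\GG(t)-LX_t$ inherits the modulus of $\GG$ up to a Lipschitz term controlled by $\sup|X|$, with no fixed-point argument required.
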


\begin{proof} The method of the proof is similar to that of Lemma 2.4 in \cite{BWY}, for the convenience of the reader, we give a sketch of the proof.  
    Let $W$ be the $L^1$-Wasserstein distance induced by the distance 
$\rr(\xi,\eta):= 1\land \|\xi-\eta\|_\infty$; that is
$$W(\mu_1,\mu_2):= \inf_{\pi\in \C(\mu_1,\mu_2)} \pi(\rr),\ \ \mu_1,\mu_2\in \scr P(\C),$$
where 
 $\C(\mu_1,\mu_2)$ is the set of all couplings of $\mu_1$ and
$\mu_2.$  It is well
known that $\scr P(\C)$ is a complete metric space with respect to
the distance $W$ (see, e.g., \cite[Lemma 5.3 and Lemma 5.4]{Chen}),
and the convergence in $W$ is equivalent to the weak convergence
whenever $\rr$ is bounded (see, e.g., \cite[Theorem 5.6]{Chen}).  Let  $\P_t^\xi$ be the law of $X_t(\xi)$. So,
to show existence of an invariant measure, it is sufficient to claim
that $\P_t^\xi$ is a $W$-Cauchy sequence, i.e.,
\begin{equation}\label{q4}
\lim_{t_1,t_2\to\8} W(\P_{t_1}^\xi,\P_{t_2}^\xi)=0.
\end{equation}

\smallskip

For any $t_2>t_1>0$, consider the following neutral SDEs of neutral
type
\begin{equation*}
\d\{X(t)+LX_t\}=\{Z(X(t))+b(X_t)\}\d t+\si\d
W(t),~t\in[0,t_2],~~X_0=\xi,
\end{equation*}
 and
\begin{equation*}
 \d\{\bar X(t)+L\bar
X_t\}=\{Z(\bar X(t))+b(\bar X_t)\}\d t+\si\d
W(t),~t\in[t_2-t_1,t_2],~~\bar X_{t_2-t_1}=\xi.
\end{equation*}
 Then the laws
of $X_{t_2}(\xi)$ and $\bar X_{t_2}(\xi)$ are $\P_{t_2}^\xi$ and
$\P_{t_1}^\xi$, respectively. Also,  following an argument of
\eqref{q3}, we can obtain that
\begin{equation*}
\e^{\rr t}\E\|X_t-\bar X_t\|^2_\8 \le
C_2\E\|X_{t_2-t_1}-\xi\|^2_\8+(\rr-\ll)\int_{t_2-t_1}^t\e^{\rr
s}\E\|X_s-\bar X_s\|^2_\8\d s
\end{equation*}
for some constant $C_2>0$, in which  $\ll>0$ is defined as in
\eqref{07}. According to the Gronwall inequality, one finds
\begin{equation*}
\E\|X_t-\bar X_t\|^2_\8 \le C_2\e^{-\ll
(t-t_2+t_1)}\E\|X_{t_2-t_1}-\xi\|^2_\8.
\end{equation*}
This, in addition to \eqref{09}, gives that
\begin{equation*}
\E\|X_{t_2}-\bar X_{t_1}\|^2_\8 \le C_2\e^{-\ll t_1},
\end{equation*}
which further implies
\begin{equation*}
W(\P_{t_1}^\xi,\P_{t_2}^\xi)\le \E\|X_{t_2}-\bar X_{t_1}\|_\8\le
\ss{C_2}\e^{-\ff{\ll t_1}{2}}.
\end{equation*}
Then, \eqref{q4} holds by taking $t_1\to\8.$ So, there exists
$\mu^\xi\in \scr P(\C)$ such that
\begin{equation}\label{w2}
\lim_{t\to\8}W(\P_t^\xi,\mu^\xi)=0.
\end{equation}
Thus, the desired assertion follows provided that we can show that
$\mu^\xi$ is independent of $\xi\in\C$. To this end, note that
\begin{equation*}
W(\mu^\xi,\mu^\eta)\le
W(\P_t^\xi,\mu^\xi)+W(\P_t^\eta,\mu^\eta)+W(\P_t^\xi,\P_t^\eta).
\end{equation*}
Taking $t\to\8$ and using \eqref{w1} and \eqref{w2}, we conclude
that  $\mu^\xi\equiv\mu^\eta$ for any $\xi,\eta\in \C$. Hence, we
conclude that  $\mu^\xi$ is independent of $\xi\in\C$.
\end{proof}

The condition (i) in Lemma \ref{general} is concerned with the
dimension-free Harnack inequality which is initiated in \cite{W97}.
To the best of our knowledge, coupling by change of measure (see,
e.g., the monograph \cite{W13}) and Malliavin calculus (see, e.g.,
\cite{BWY2}) are two popular approaches to establish the Harnack
inequality, which has considerable applications in contractivity
properties, functional inequalities, short-time behaviors of
infinite-dimensional diffusions, as well as  heat kernel estimates
(see, e.g., Wang \cite{W13,W07}). For the Harnack inequality of
stochastic partial differential equations (SPDEs), we refer to the
monograph \cite{W13}. For the Harnack inequality of SDEs with
memory, we would like to refer to \cite{ES} for functional SDEs with
additive noises, \cite{WY} for functional SDEs with multiplicative
noises, \cite{BWY1} for stochastic functional Hamiltonian systems
with degenerate noises, and \cite{BWY2} for functional SPDEs with
additive noises.

\smallskip

In this paper, we also adopt the coupling by change of measure (see,
e.g., the monograph \cite{W13}) to establish the Harnack inequality
for \eqref{eq0}. However, due to the appearance of neutral term, as
we observe below, it becomes more tricky to investigate  the
Harnack inequality for the functional SDE of neutral type
\eqref{eq0}.

\smallskip

For $\xi_x(\theta)\equiv x$ and $\eta_y(\theta)\equiv y$ with
$\theta\in[-r_0,0]$,    from (H1) and (H2), there exists
$\kk_1\in\R$ such that
\begin{equation}\label{*5}
\<Z(x)-Z(y), x-y\> \le -\kk_1|x-y|^2,~~~x,y\in\R^n.
\end{equation}

\begin{lem}\label{Harnack}
{\rm Let the assumptions of Theorem \ref{T1.1} hold. Then, there
exists a constant $c>0$ such that
\begin{equation}\label{q1}
(P_tf(\xi))^2\le
(P_tf^2(\eta))\e^{c\|\xi-\eta\|^2_\8},~~~f\in\B_b(\C),~\xi,~\eta\in
\C,~t>r_0.
\end{equation}
}
\end{lem}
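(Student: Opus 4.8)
The plan is to establish the dimension-free Harnack inequality \eqref{q1} via the coupling-by-change-of-measure method, adapted to the neutral term. Fix $\xi,\eta\in\C$ and $T>r_0$. The idea is to run the original equation $X(t)$ with $X_0=\xi$, and construct a coupling process $Y(t)$ with $Y_0=\eta$ that satisfies the same neutral SDE but driven by a modified Brownian motion $\tilde W(t)=W(t)+\int_0^t\beta(s)\,\d s$, with the coupling drift $\beta$ chosen so that $Y(T)=X(T)$ (and hence $Y_T=X_T$, using $T>r_0$). Once this is achieved, the Harnack inequality will follow from the Girsanov theorem together with Young's inequality in the usual way: writing $R=\exp(-\int_0^T\langle\beta(s),\d W(s)\rangle-\tfrac12\int_0^T|\beta(s)|^2\d s)$, one has $P_Tf(\xi)=\E[f(X_T)]=\E[R f(Y_T)]=\E[R f(X_T(\eta))]$ under the shifted measure, so $(P_Tf(\xi))^2\le (\E R^2)(P_Tf^2(\eta))$, and the job reduces to bounding $\E R^2$, equivalently the second moment of $\int_0^T|\beta(s)|^2\d s$, by $\e^{c\|\xi-\eta\|_\infty^2}$.

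The genuinely new step is the choice of $\beta$. Set $\Theta(t):=X(t)-Y(t)$ and $\Gamma(t):=\Theta(t)+L(X_t-Y_t)$. Because the neutral term involves the whole segment, one cannot simply force $\Theta$ itself to decay; instead I would design $\beta$ so that the auxiliary process $\Gamma(t)$ obeys a controllable ODE. Concretely, I would take the coupling drift of the form $\sigma\beta(t) = -\theta(t)\,\Theta(t)$ for a suitable deterministic rate function $\theta(t)$ (for example piecewise constant or linear in $t$, blowing up as $t\uparrow r_0$, and set $\beta\equiv 0$ on $[r_0,T]$ once the segments have coalesced), arranged so that $\Gamma$ satisfies $\d\Gamma(t) = \big(\{Z(X(t))-Z(Y(t))+b(X_t)-b(Y_t)\} - \theta(t)\Theta(t)\big)\d t$ and $\Gamma(r_0)=0$. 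Using \eqref{*5}, (H1)–(H2), and the relation between $\Theta$ and $\Gamma$ via $|L\phi|\le\kk r_0\|\phi\|_\infty$ and \eqref{q2} exactly as in the proof of Lemma \ref{exponential}, I would choose $\theta(t)$ large enough on $[0,r_0)$ to dominate the Lipschitz contributions of $Z$ and $b$ and drive $|\Gamma(t)|\to 0$; then from the inequality of the type \eqref{01} relating $\sup_s(\e^{\rr s}|X(s)-Y(s)|^2)$ to $\sup_s(\e^{\rr s}|\Gamma(s)|^2)$ and to $\|\xi-\eta\|_\infty^2$, one gets $\Theta(r_0)=0$ and $X_{r_0}=Y_{r_0}$, hence $X_t=Y_t$ for all $t\ge r_0$, in particular at $t=T$.

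With this coupling in hand, the remaining work is the moment estimate. I would show $\int_0^T|\beta(s)|^2\d s = \int_0^{r_0}|\sigma^{-1}\theta(s)\Theta(s)|^2\d s \le c\,\sup_{0\le s\le r_0}|\Theta(s)|^2$ (here $\sigma^{-1}$ exists by assumption, and $\theta$ is deterministic and integrable-squared away from the singularity if the singularity is chosen mild enough, e.g. $\theta(s)\asymp (r_0-s)^{-1}$ with the factor $\Theta(s)$ vanishing at rate that compensates); a cleaner route is to pick $\theta(s)=\kappa_0\,\mathbf 1_{[0,r_0)}$ plus a boundary term handled as in \cite{WY}, so that $\int_0^{r_0}|\beta(s)|^2\d s\le c\,\|\xi-\eta\|_\infty^2$ deterministically by the a priori bound of Lemma-type \eqref{01}. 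Since $\beta$ is then bounded by a deterministic multiple of $\|\xi-\eta\|_\infty$, Novikov's condition holds, $\tilde W$ is a Brownian motion under the new measure, and $\E R^2\le \e^{c\|\xi-\eta\|_\infty^2}$ follows at once. The main obstacle is precisely the first geometric issue: unlike the non-neutral case, the decay must be engineered through $\Gamma$ rather than $\Theta$, and one must verify that the smallness condition \eqref{07} (in particular $1-\kk r_0^2\e^{\rr r_0}>0$) is exactly what permits the passage from control of $\Gamma$ back to control of $\Theta$ and to the $\|\xi-\eta\|_\infty^2$ bound without losing the contraction; the rest is a routine Girsanov-plus-Young argument following \cite{W13,WY}.
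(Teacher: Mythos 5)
Your high-level plan (coupling by change of measure, Girsanov, then a bound on $\E R^2$) is the same as the paper's, but the coalescence step --- the only genuinely delicate point in the neutral setting --- fails as written. You propose to drive $\Gamma(t):=\Theta(t)+L(X_t-Y_t)$ to zero at $t=r_0$ and to conclude that $\Theta(r_0)=0$ and $X_{r_0}=Y_{r_0}$. Neither conclusion follows. From $\Gamma(r_0)=0$ one only gets $\Theta(r_0)=-L(X_{r_0}-Y_{r_0})=\kk\int_{-r_0}^0(Y-X)(r_0+\theta)\,\d\theta$, which need not vanish; an \eqref{01}-type comparison transfers supremum bounds from $\Gamma$ to $\Theta$, not exact vanishing at a prescribed time. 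More fundamentally, $X_{r_0}=Y_{r_0}$ would mean $X(s)=Y(s)$ for \emph{every} $s\in[0,r_0]$, which is impossible for continuous paths with $X(0)=\xi(0)\neq\eta(0)=Y(0)$: segment coalescence can only occur a full delay length \emph{after} pointwise coalescence. And even if you arrange $\Theta(\tau)=0$ at some time $\tau$, switching the coupling drift off does not keep $\Theta\equiv 0$ on $[\tau,\tau+r_0]$, because in your formulation $Y$ carries its own segment, so the two drifts still differ through $b(X_s)-b(Y_s)$ and through $\ff{\d}{\d s}L(X_s-Y_s)=\kk(\Theta(s)-\Theta(s-r_0))\neq 0$; the processes separate again, $Y_T\neq X_T$, and the Girsanov identity that converts $P_Tf$ at one initial point into an expectation along the other path breaks down, so \eqref{q1} does not follow.

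The paper avoids exactly this by letting $Y$ solve the auxiliary equation \eqref{*}, in which both the neutral term $LX_s$ and the delay drift $b(X_s)$ are evaluated along $X$'s segment. Then $\Theta$ obeys the memoryless ODE $\d\Theta(s)=\{Z(X(s))-Z(Y(s))-g(s)1_{[0,\tau)}(s)\Theta(s)/|\Theta(s)|\}\,\d s$; the one-sided bound \eqref{*5} with the explicit $g$ of \eqref{CD2} forces $\tau\le t$, a Gronwall argument keeps $\Theta\equiv 0$ after $\tau$, and only at time $t+r_0$ do the segments coincide. Girsanov is then applied with a drift $h=h_11_{[0,r_0]}+h_21_{(r_0,t+r_0]}+h_3$ whose first two pieces are engineered so that $\d L(Y_s-X_s)=\{h_1+h_2\}\,\d s$ (restoring the neutral term $LY_s$) and whose third piece restores $b(Y_s)$ and removes $g$, after which $Y$ is a weak solution of \eqref{eq0} from $\eta$; the deterministic pathwise bound \eqref{CD3} controls $\int_0^{t+r_0}|\si^{-1}h(s)|^2\d s$ and hence $\E R^2\le\e^{c\|\xi-\eta\|_\8^2}$. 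Your argument could be repaired along these lines (couple through $X$'s segment first, wait the extra $r_0$, then shift the measure), but as it stands the passage from control of $\Gamma$ to coalescence of the segments is a genuine gap.
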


\begin{proof}
We adopt the coupling by change of measures (see, e.g., the
monograph \cite{W13}) to establish the Harnack inequality
\eqref{q1}. Let $\{Y(s)\}_{s\ge0}$ solve an SDE without memory
\begin{equation}\label{*}
\begin{split}
&\d\{Y(s)+LX_s\}\\
&=\Big\{Z(Y(s))+b(X_s)+g(s)1_{[0,\tau)}(s) \cdot
\ff{X(s)-Y(s)}{|X(s)-Y(s)|}\Big\}\d s+\si\d W(s),~~~s>0,
\end{split}
\end{equation}
with $Y_0=\eta\in\C$, where
$$\tau:=\inf\{s\ge 0: X(s)=Y(s)\}$$ is the coupling time and $g:[0,\infty)\mapsto\R_+$  is a continuous mapping to be
determined. By \eqref{*5} and the chain rule,
\begin{equation*}
\begin{split}
\d(\e^{\kk_1s}|X(s)-Y(s)|)
&=\kk_1\e^{\kk_1s}|X(s)-Y(s)|\d
s+\e^{\kk_1s}\{-g(s)1_{[0,\tau)}(s)\\
&\quad+|X(s)-Y(s)|^{-1}\<X(s)-Y(s),Z(X(s))-Z(Y(s))\>\}\d s\\
&\le -\e^{\kk_1s}g(s)\d s, ~~~~~s<\tau.
\end{split}
\end{equation*}
Thus, one has
\begin{equation}\label{*4}
\begin{split}
|X(s)-Y(s)|\le
\e^{-\kk_1s}|\xi(0)-\eta(0)|-\e^{-\kk_1s}\int_0^s\e^{\kk_1r}g(r)\d
r,~~~s\le \tau.
\end{split}
\end{equation}In
\eqref{*4}, in particular, choosing \beq\label{CD2}  g(r)=
\ff{|\xi(0)-\eta(0)| \e^{\kk_1r}}{\int_0^t\e^{2\kk_1r}\d r},\ \ \
r\in [0,t]
\end{equation} leads to
\begin{equation}\label{*7}
|X(s) -Y(s)| \le
\ff{|\xi(0)-\eta(0)|(\e^{2\kk_1t-\kk_1s}-\e^{\kk_1s})}{\e^{2\kk_1t}-1},\
\ s\le \tau.
\end{equation} If $t<\tau$, \eqref{*7} implies $X(t)=Y(t)$, which
contradicts the definition of coupling time $\tau.$ Consequently,
we have
  $\tau\le t$. Also,  by the chain
rule, for any $\vv>0$, we obtain from \eqref{*5} and
$X(\tau)=Y(\tau)$ that
\begin{equation*}
\begin{split}
(\vv+|X(t)-Y(t)|^2)^{1/2}&=(\vv+|X(\tau)-Y(\tau)|^2)^{1/2}\\
&\quad+\int_\tau^t(\vv+|X(s)-Y(s)|^2)^{-1/2}\<X(s)-Y(s),Z(X(s))-Z(Y(s))\>\d
s\\
&\le
\ss\vv-\kk_1\int_\tau^t(\vv+|X(s)-Y(s)|^2)^{-1/2}(\vv+|X(s)-Y(s)|^2-\vv)\d
s\\
&\le(1+|\kk_1|(t-\tau))\ss\vv+|\kk_1|\int_\tau^t(\vv+|X(s)-Y(s)|^2)^{1/2}\d
s,~~~t>\tau.
\end{split}
\end{equation*}
Taking $\vv\downarrow0$ and utilizing Gronwall's inequality, we
conclude that $X(t)=Y(t)$ for any $t\ge\tau.$ Hence,
$X_{t+r_0}=Y_{t+r_0}$, and
 \beq\label{CD3} |X(s) -Y(s)| \le
\ff{|\xi(0)-\eta(0)|(\e^{2\kk_1t-\kk_1s}-\e^{\kk_1s})}{\e^{2\kk_1t}-1}=:G(s),\
\ s\le t.\end{equation} Let
\begin{equation*}
\tt W(s):=\int_0^s\si^{-1}h(r)\d r+W(s),~~~s\in[0,t+r_0],
\end{equation*}
where
\begin{equation*}
\begin{split}
h(r):&=\{\xi(r-r_0)-\eta(r-r_0)+Y(r)-X(r)\}1_{[0,r_0]}(r)+\Big(\int_{r-r_0}^r\LL(u)\d
u\Big)1_{(r_0,t+r_0]}(r)\\
&\quad+\Big\{1_{[0,\tau)} g(r)\ff{X(r)-Y(r)}{|X(r)-Y(r)|} +
b(X_r)-b(Y_r)\Big\}\\
&=:h_1(r)1_{[0,r_0]}(r)+h_2(r)1_{(r_0,t+r_0]}(r)+h_3(r).
\end{split}
\end{equation*}
Herein $ \Lambda(u):=Z(Y(u))-Z(X(u))+g(u)1_{[0,\tau)}(u) \cdot
\ff{X(u)-Y(u)}{|X(u)-Y(u)|},u\in[s-r_0,s]. $ For any $s\in[0,r_0],$
note that
\begin{equation}\label{eq2}
\begin{split}
&\int_0^sh_1(r)\d r+\int_0^{r_0}\{\eta(u-r_0)-\xi(u-r_0)\}\d
u\\
&=\int_{-r_0}^{-s}\{\eta(s+\theta)-\xi(s+\theta)\}\d
\theta+\int_{-s}^0\{Y(s+\theta)-X(s+\theta)\}\d\theta\\
&=\int_{-r_0}^0\{\eta(s+\theta)-\xi(s+\theta)\}1_{\{s+\theta\le0\}}\d\theta+\int_{-r_0}^0\{Y(s+\theta)-X(s+\theta)\}1_{\{s+\theta>0\}}\d\theta\\
&= L(Y_s-X_s).
\end{split}
\end{equation}

\smallskip

From \eqref{eq0} and \eqref{*}, it is trivial to see that
\begin{equation}\label{eq1}
Y(r)-X(r)=\eta(0)-\xi(0)+\int_0^r\LL(u)\d u,~~~~r\ge0.
\end{equation}
For arbitrary $s\in(r_0,t+r_0],$ according to \eqref{eq1}, it
follows that
\begin{equation}\label{eq3}
\begin{split}
&(\eta(0)-\xi(0))r_0+\int_0^{r_0}(r_0-u)\LL(u)\d
u+\int_{r_0}^sh_2(r)\d
r\\
&=(\eta(0)-\xi(0))r_0+\int_0^{r_0}(r_0-u)\LL(u)\d u\\
&\quad+\int_{r_0}^s\Big(r_0\int_0^{r-r_0}\LL(u)\d u +\int_{r-r_0}^r(r-u)\LL(u)\d u\Big)^\prime\d r\\
&=(\eta(0)-\xi(0))r_0+r_0\int_0^{s-r_0}\LL(u)\d u +\int_{s-r_0}^s(s-u)\LL(u)\d u\\
&=\int_{-r_0}^0\Big\{\eta(0)-\xi(0)+\int_0^{s+\theta}\LL(u)\d
u\Big\}\d\theta\\
&= L(Y_s-X_s).
\end{split}
\end{equation}
Hence, from  \eqref{eq2} and \eqref{eq3},  we arrive at
\begin{equation}\label{eq4}
\d L(Y_s-X_s)=\{h_1(s)1_{[0,r_0]}(s)+h_2(s)1_{(r_0,t+r_0]}(s)\}\d
s,~~~~s\in[0,r_0+t].
\end{equation}

\smallskip
Observing that $G(s)$, defined in \eqref{CD3}, is decreasing
for $s\in[0,t]$ and taking \eqref{CD3} into account gives that
\begin{equation}\label{*9}
\|X_s-Y_s\|_\infty^2 \le 1_{[0,r_0]}(s) \|\xi-\eta\|_\infty^2 +
1_{(r_0,t+r_0]}(s) \ff{|\xi(0)-\eta(0)|^2
(\e^{2\kk_1t-\kk_1(s-r_0)}-\e^{\kk_1(s-r_0)})^2}{(\e^{2\kk_1t}-1)^2}.
\end{equation}
By  (H1), \eqref{CD3} and \eqref{*9}, for any $\aa,\bb,\dd>0,$ we
derive from H\"older's inequality  that
 \begin{equation}\label{*8}
 \begin{split}
 &|h(s)|^2\\
 &\le(1+\aa^{-1})\Big\{4\|\xi-\eta\|_\8^21_{[0,r_0]}(s)+r_0\int_{s-r_0}^t\Big((1+\dd)1_{[0,t]} (u)g^2(u)\\
 &\quad+(1+\dd^{-1})L_1^2|X(u)-Y(u)|^2\Big)\d u1_{(r_0,t+r_0]}(s)\\
 &\quad+r_0\int_t^s\Big((1+\dd)1_{[0,t]} (u)g^2(u)\\
 &\quad+(1+\dd^{-1})L_1^2|X(u)-Y(u)|^2\Big)\d u1_{(r_0,t]}(s)\Big\}\\
 &\quad+(1+\aa)\{(1+\bb)g^2(s)1_{[0,t]}(s)+(1+\bb^{-1})L_2^2\|X_s-Y_s\|_\8^2\}\\
 &\le(1+\aa^{-1})\Big\{4\|\xi-\eta\|_\8^21_{[0,r_0]}(s)+r_0\int_{s-r_0}^t\Big((1+\dd)1_{[0,t]} (u)g^2(u)\\
 &\quad+(1+\dd^{-1})L_1^2|X(u)-Y(u)|^2\Big)\d u1_{(r_0,t+r_0]}(s)\Big\}\\
 &\quad+(1+\aa)\{(1+\bb)g^2(s)1_{[0,t]}(s)+(1+\bb^{-1})L_2^2\|X_s-Y_s\|_\8^2\}\\
 &\le(1+\aa^{-1})\Big\{4\|\xi-\eta\|_\8^21_{[0,r_0]}(s)\\
 &\quad+\ff{2r_0\kk_1(1+\dd)|\xi(0)-\eta(0)|^2}{(\e^{2\kk_1t}-1)^2}\Big(\e^{2\kk_1t}-\e^{2\kk_1(s-r_0)}\Big)1_{(r_0,t+r_0]}(s)\\
 &\quad+r_0L_1^2(1+\dd^{-1})(t-s+r_0)G^2(s-r_0)1_{(r_0,t+r_0]}(s)\Big\}\\
 &\quad+(1+\aa)\Big\{\ff{4(1+\bb)\kk_1^2|\xi(0)-\eta(0)|^2\e^{2\kk_1s}}{(\e^{2\kk_1t}-1)^2}1_{[0,t]}(s)\\
 &\quad+(1+\bb^{-1})L_2^2 \|\xi-\eta\|_\infty^21_{[0,r_0]}(s)+
(1+\bb^{-1})L_2^2G^2(s-r_0)1_{(r_0,t+r_0]}(s)\Big\}.
\end{split}
\end{equation}
This, together with a straightforward calculation, yields
\begin{equation*}
\begin{split}
\E\exp\Big(\ff{1}{2}\int_0^{t+r_0}|\si^{-1}h(r)|^2\d r\Big)<\8.
\end{split}
\end{equation*}
As a result,  Novikov's condition holds so that, by the Girsanov
theorem, $\{\tt W(s)\}_{t\in[0,t+r_0]}$ is a Brownian motion under
the weighted probability measure $\d\Q:= R\d\P$ with
 $$R:= \exp\bigg[-\int_0^{t+r_0}\<\si^{-1}h(s), \d W(s)\>-\ff 1 2 \int_0^{t+r_0} |\si^{-1}h(s)|^2 \d s\bigg].$$
Due to \eqref{eq4}, equation \eqref{*} can be reformulated as
\begin{equation*}
\begin{split}
\d\Big\{Y(s)+\int_{-\tau}^0Y(s+\theta)\nu(\d\theta)\Big\}
=\{Z(Y(s))+b(Y_s)\}\d s+\si\d  \tt W(s)
\end{split}
\end{equation*}
with the initial data $Y_0=\eta\in\C.$  By virtue of  weak
uniqueness of solutions and $X_{t+r_0}(\xi)=Y_{t+r_0}(\eta)$, in
addition to the H\"older inequality, one finds,
\begin{equation}\label{*10}
\begin{split}
(P_{t+r_0}f(\eta))^2&=(\E f(X_{t+r_0}(\eta)))^2=(\E_\Q
f(Y_{t+r_0}(\eta)))^2=(\E [Rf(Y_{t+r_0}(\eta))])^2\\
&=(\E [Rf(X_{t+r_0}(\xi))])^2\le  \E R^2  P_{t+r_0}f^2(\xi),
\end{split}
\end{equation}
and that
  \beq\label{NW2}
\beg{split} &\E   R^2\\  &\le \Big(\E \e^{6\int_0^{t+r_0}
|\si^{-1}h(s)|^2\d s }\Big)^{1/2}
\Big(\E\e^{-4\int_0^{t+r_0}\<\si^{-1}h(s), \d B(s)\>-8
\int_0^{t+r_0} |\si^{-1}h(s)|^2 \d
 s}\Big)^{1/2}\\
 & =\Big(\E
\e^{6\int_0^{t+r_0} |h(s)|^2\d s }\Big)^{1/2}.
 \end{split}\end{equation}
Hence,  from \eqref{*8}-\eqref{NW2},
 for any $t_0>r_0,p>1,\aa,\bb,\dd>0$,
positive $f\in \B_b(\C),$  and $\xi,\eta\in\C$, we deduce that
\begin{equation}\label{*6}
\begin{split}
&(P_{t_0}f(\eta))^2\\&\le P_{t_0}f^2(\xi)
\exp\bigg[3\|\si^{-1}\|^2\Big\{ (1+\aa^{-1})\Big(4\|\xi-\eta\|_\8^2r_0\\
 &\quad+\ff{r_0(1+\dd)|\xi(0)-\eta(0)|^2}{(\e^{2\kk_1t}-1)^2}\Big(2\kk_1t\e^{2\kk_1t}+1-\e^{2\kk_1t}\Big)\\
 &\quad+\ff{r_0L_1^2(1+\dd^{-1})|\xi(0)-\eta(0)|^2}{4\kk_1^2(\e^{2\kk_1t}-1)^2}\times\Big((2\kk_1t-1)(\e^{4\kk_1t}-4\kk_1t\e^{2\kk_1t}-1)\\
 &\quad+2(\e^{2\kk_1t}-1)+4\kk_1t\e^{2\kk_1t}(2\kk_1-1)\Big)\Big)\\
 &\quad+(1+\aa)\Big(\ff{4(1+\bb)\kk_1|\xi(0)-\eta(0)|^2}{2(\e^{2\kk_1t}-1)}\\
 &\quad+(1+\bb^{-1})L_2^2r_0
 \|\xi-\eta\|_\infty^2+\ff{(1+\bb^{-1})L_2^2|\xi(0)-\eta(0)|^2(\e^{4\kk_1t}-4\kk_1t\e^{2\kk_1t}-1)}{2\kk_1(\e^{2\kk_1t}-1)^2}\Big)\Big\}.
\end{split}
\end{equation}
By the Markov property and Schwartz's inequality, for any $t>0$, we
deduce from \eqref{*6}   that there exists $c_0>0$ such that
\begin{equation*}\begin{split}
|P_{t+t_0} f(\xi)|^2& = |\E (P_{t_0} f)(X_t(\xi))|^2  \le \Big(\E \ss{ (P_{t_0} f^2 (X_t(\eta)))\exp[c_0 \|X_t(\xi)-X_t(\eta)\|_\infty^2]}\Big)^2\\
&\le (\E (P_{t_0} f^2 (X_t(\eta)))  \E
\e^{c_0\|X_t(\xi)-X_t(\eta)\|_\infty^2}\\
&= (P_{t+t_0}f^2(\eta)) \E \e^{c_0\|X_t(\xi)-X_t(\eta)\|_\infty^2}.
\end{split}
\end{equation*}
At last, the desired assertion follows from \eqref{w1} immediately.

\end{proof}

\begin{rem}
{\rm In Lemma \ref{Harnack}, we only investigate the Harnack
inequality for a special  class of functional SDEs of neutral type.
It is still open on how to establish the Harnack inequality for
functional SDEs of neutral type with general neutral terms. }
\end{rem}

\noindent{\bf Proof of Theorem \ref{T1.1}} By virtue of Lemma
\ref{general}, Theorem \ref{T1.1} (1)-(3) follows from  Lemma
\ref{exponential}, Lemma \ref{inva}, and Lemma \ref{Harnack}.
  According to \cite[Proposition 2.3]{Wang14},
Theorem \ref{T1.1} (1) implies the desired exponential convergence
of $P_t$ in entropy and in $L^2(\mu)$ in Theorem \ref{T1.1} (3) and
(4), respectively. Moreover, Theorem \ref{T1.1} (5) follows by
carrying out a similar argument to that of \cite[Theorem 1.1
(4)]{BWY}.

\end{document}